\def\blfootnote{\xdef\@thefnmark{}\@footnotetext}
\newtheorem{thm}{Theorem}[section]
\newtheorem{lem}[thm]{Lemma}
\newtheorem{prop}[thm]{Proposition}
\theoremstyle{definition}
\newtheorem{defn}[thm]{Definition}
\theoremstyle{remark}
\newtheorem{rem}[thm]{Remark}
\newtheorem{ex}[thm]{Example}
\newcommand{\wreath}[2]{#1 \, {\rm wr} \, #2}
\begin{document}

\title{A mixed identity-free elementary amenable group}
\author{B. Jacobson}
\date{}

\maketitle

\begin{abstract}
A group $G$ is called \textit{mixed identity-free} if for every $n \in \mathbb{N}$ and every $w \in G \ast F_n$ there exists a homomorphism $\varphi: G \ast F_n \rightarrow G$ such that $\varphi$ is the identity on $G$ and $\varphi(w)$ is nontrivial. In this paper, we make a modification to the construction of elementary amenable lacunary hyperbolic groups provided by Ol'shanskii, Osin, and Sapir in \cite{O-O-S} to produce finitely generated elementary amenable groups which are mixed identity-free. As a byproduct of this construction, we also obtain locally finite $p$-groups which are mixed identity-free.
\end{abstract}

\vspace{-2mm} \hspace{3mm} \textbf{MSC Subject Classification:} 20F65, 20F67.



\section{Introduction}


Let $F=F(x,y,\ldots)$ denote the free group with basis $\{x,y,\ldots\}$. A group $G$ is said to satisfy the \textit{mixed identity} $w=1$ for some $w \in  G \ast F$ if for every homomorphism $\varepsilon : G \ast F \rightarrow G$ which is identical on $G$, the image of $w$ under $\varepsilon$ is trivial. The mixed identity $w=1$ is called \textit{nontrivial} if $w$ is a nontrivial element of $G \ast F$. Mixed identities may be viewed as generalizations of the usual group identites. For example, while any abelian group satisfies the nontrivial identity $[x,y]=1$, any group $G$ with nontrivial center $Z(G)$ satisfies the nontrivial mixed identity $[x,g]=1$ for any element $g \in Z(G) \backslash \{1\}$. Importantly, a group can satisfy a nontrivial mixed identity while failing to satisfy any nontrivial identity, as shown below.

\begin{ex}\label{direct product ex}
Let $A$ and $B$ be nontrivial groups. Then direct product $G=A \times B$ satisfies the mixed identity $[[x,a],b]=1$ which is nontrivial for any choice of $a \in A \backslash \{1\}$ and $b \in B \backslash \{1\}$. If, furthermore, $A$ does not satisfy any nontrivial identity, then $G$ also fails to satisfy any nontrivial identity, since if $w=1$ holds in $G$ for some $w \in F$, it must also hold in any subgroup of $G$.
\end{ex}

$G$ is called \textit{mixed identity-free} (hereafter abbreviated \textit{MIF}) if it does not satisfy any nontrivial mixed identity. Groups that are MIF are subject to strict structural restrictions. For example, MIF groups do not decompose as nontrivial direct products (as demonstrated above) and have infinite conjugacy class property. For finitely generated groups, the property of being MIF implies infinite girth. MIF groups also resemble free products from the model theoretic point of view, i.e. a countable group $G$ is MIF if and only if $G$ and $G \ast F_n$ are universally equivalent as $G$-groups for all $n \in \mathbb{N}$. (For proofs, see \cite[Prop. 5.3, 5.4]{H-O}.)

In \cite{H-O}, Hull and Osin call for an example of a finitely generated MIF amenable group and suggest that the elementary amenable lacunary hyprbolic groups constructed in \cite{O-O-S} are reasonable candidates. In this paper, we show that a modification to this construction yields the following.

\begin{thm}\label{main result}
There exists a $2$-generated elementary amenable group which is MIF.
\end{thm}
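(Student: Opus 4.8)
The plan is to run the construction of \cite{O-O-S} with additional bookkeeping so that the resulting group is MIF. Recall that the elementary amenable lacunary hyperbolic group of \cite{O-O-S} is obtained as a direct limit $G=\varinjlim_i G_i$ along epimorphisms $\eta_i\colon G_i\twoheadrightarrow G_{i+1}$, where $G_0$ is free of rank two (so that $G$ will be $2$-generated), each $G_i$ is a non-elementary hyperbolic group obtained from $G_{i-1}$ by imposing relators from a class satisfying a graded small cancellation condition, and the relators are chosen so that in the limit the hyperbolicity constants become negligible relative to the radii of injectivity of the maps $G_i\to G$ (giving lacunary hyperbolicity) while a fixed normal subgroup of $G$ is a locally finite $p$-group with infinite cyclic quotient (giving elementary amenability). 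The feature I will use repeatedly is the flexibility of the graded small cancellation set-up: given $G_i$ and any finite $S\subseteq G_i\setminus\{1\}$, there are admissible relators whose normal closure misses $S$, so the next step of the construction can always be taken while preserving the nontriviality in the limit of every element of $S$.

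I would enlarge the list of requirements governing the relators in two ways. First, I would insist that each $G_i$ have trivial finite radical; as the $G_i$ are non-elementary hyperbolic, it then follows from results of Hull and Osin \cite{H-O} that each $G_i$ is itself MIF (destroying a potential finite normal subgroup costs only an admissible relator). Second, I would dovetail the construction with the processing, one at a time as it proceeds, of all triples $(i,n,v)$ with $v$ a nontrivial element of $G_i\ast F_n$ (there being only countably many): when $(i,n,v)$ is processed at some stage $j\ge i$, if the image $v_j$ of $v$ in $G_j\ast F_n$ is still nontrivial, then, using that $G_j$ is MIF, I choose $r_1,\dots,r_n\in G_j$ with $g:=v_j(r_1,\dots,r_n)\neq 1$ in $G_j$ and add $g$ to the (always finite) set of elements that all later relators must keep alive. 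The step I expect to be the real obstacle is verifying that this enlarged system of requirements is consistent with that of \cite{O-O-S}: one must re-examine the inductive choice of relators and check that each new constraint is of the innocuous ``add one long relator, avoiding a prescribed finite set'' type — in particular that an element we elect to preserve never lies in a finite normal subgroup we must destroy — so that neither the lacunarity estimates nor the structure of $G$ as an extension of a locally finite $p$-group by $\mathbb{Z}$ is disturbed.

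Granting such a construction, $G$ is MIF. It is $2$-generated (by the images of the generators of $G_0$) and elementary amenable, so fix $n\in\mathbb{N}$ and a nontrivial $w\in G\ast F_n$. Since $G\ast F_n=\varinjlim_i(G_i\ast F_n)$, there are $i$ and a nontrivial $v\in G_i\ast F_n$ mapping to $w$, and then $v_j$ is nontrivial in $G_j\ast F_n$ for all $j\ge i$ because it maps onto $w$; in particular the triple $(i,n,v)$ was processed, say at stage $j$, producing $r_1,\dots,r_n\in G_j$ with $g=v_j(r_1,\dots,r_n)\neq 1$ in $G_j$, and $g$ was kept alive, so its image $\bar g$ in $G$ is nontrivial. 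Let $\varphi\colon G\ast F_n\to G$ be the homomorphism that is the identity on $G$ and sends each free generator $x_k$ to the image in $G$ of $r_k$. Then the composite $G_j\ast F_n\to G\ast F_n\xrightarrow{\varphi}G$ agrees with the composite of the retraction $G_j\ast F_n\to G_j$, $x_k\mapsto r_k$, followed by the projection $G_j\to G$; evaluating both on $v_j$ gives $\varphi(w)=\bar g\neq 1$. As $w$ and $n$ were arbitrary, $G$ is MIF. Finally, the same bookkeeping applied to the finite $p$-groups exhausting the locally finite normal subgroup of $G$ yields the locally finite $p$-groups that are MIF promised in the abstract.
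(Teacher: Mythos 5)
Your overall strategy is exactly the paper's: diagonalize over the direct limit defining the Ol'shanskii--Osin--Sapir group, using the Hull--Osin result that a non-elementary hyperbolic group with trivial finite radical is MIF to pick, at each stage, a witness for the next word, and then tuning the parameters of the construction so that each witness survives to the limit. Your closing verification (a nontrivial $w \in G \ast F_n$ pulls back to a nontrivial element of some $G_i \ast F_n$, gets processed, and its witness stays alive) is also the paper's argument; the paper merely adds a cosmetic reduction to single-variable words in $G \ast \langle x \rangle$ before enumerating.

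However, you have misidentified which OOS construction is being run, and this leaves a genuine gap at precisely the step you flag as ``the real obstacle.'' The elementary amenable examples of \cite{O-O-S} are not obtained by graded small cancellation over a free group of rank two: here $G_0 \cong (\mathbb{Z}/p\mathbb{Z}) \ast \mathbb{Z}$ (generated by $a_0$ and the shift $t$), and $G_n$ is obtained from $G_{n-1}$ by imposing nilpotency relations of class $c_n$ on the subgroups $\langle a_i, \ldots, a_{i+n} \rangle$. The flexibility you invoke (``keep a prescribed finite set alive at the next step'') is available in this setting --- it amounts to choosing $c_{n+1}$ large --- but your plan to ``destroy a potential finite normal subgroup at the cost of an admissible relator'' is neither available nor needed, and adding relators to kill torsion normal subgroups would threaten the (locally finite $p$-group)-by-$\mathbb{Z}$ structure that gives elementary amenability in the first place. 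The paper instead proves (Lemma \ref{non-elem}) that the finite radical of each $G_n$ is automatically trivial: every element of $G_n$ can be written as $a t^{m}$ with $a \in A_n$, so all torsion lies in $A_n$; a finite subgroup of $A_n$ is contained in some $\langle a_{-N}, \ldots, a_N \rangle$; and such a subgroup meets its conjugate by $t^{2N+1}$ trivially, so it cannot be both normal and nontrivial. With that lemma in hand your consistency worry disappears and the rest of your argument goes through as written; without it, the claim that each $G_j$ is MIF --- on which the whole bookkeeping scheme rests --- is unproved.
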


\noindent Moreover, as a byproduct of the construction, we obtain the following:

\begin{thm}\label{loc nilp}
For each prime $p$, there exists a locally finite $p$-group which is MIF.
\end{thm}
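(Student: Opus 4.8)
The strategy is to extract the group of Theorem~\ref{loc nilp} from the construction carried out to prove Theorem~\ref{main result}, and to show that the homomorphisms used there to witness the MIF property already do the job. In that construction the finitely generated group $G$ is assembled from an ascending chain of finite $p$-groups $P_1 \le P_2 \le \cdots$, built out of one another by wreath products with finite $p$-groups (chosen large enough for the main argument to run); write $P = \bigcup_n P_n$. I would take this $P$ as the group of the statement: it is a $p$-group since every $P_n$ is, it is locally finite since any finite subset of $P$ lies in some $P_n$, and (although this is not asserted in the statement) it is elementary amenable because locally finite groups are.

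For the MIF property, fix $m$ and a nontrivial $w \in P \ast F_m$; we must produce a homomorphism $\varphi : P \ast F_m \to P$ which is the identity on $P$ and has $\varphi(w) \ne 1$, and such a $\varphi$ is exactly a choice of the tuple $(\varphi(x_1), \dots, \varphi(x_m)) \in P^m$. Choose $k$ so that every $P$-syllable occurring in $w$ lies in $P_k$. I would look for the images $\varphi(x_j)$ among the levels $P_{k+1}, P_{k+2}, \dots$ strictly above $P_k$, chosen in ``general position'' — for instance, taking each $\varphi(x_j)$ to be supported on a single, carefully placed coset of the relevant base group and to take generic values there — so that, when $w(\varphi(x_1),\dots,\varphi(x_m))$ is evaluated inside a high finite quotient $P_N$, the wreath-product supports coming from the free syllables cannot be completely annihilated, either against one another or by the fixed $P_k$-syllables separating them: at least one coordinate of $\varphi(w)$ survives. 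I would try to establish this by induction on the syllable length of a cyclically reduced form of $w$, keeping track of a ``surviving coordinate'', or failing that by a probabilistic count — assigning the generic values at random and bounding the probability of total cancellation strictly below $1$, which is where the largeness of the finite $p$-groups used in the construction enters.

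Before doing any of that, I would check whether it is already implicit in the proof of Theorem~\ref{main result}: if the homomorphisms $\varphi$ manufactured there to show $G$ is MIF can be taken with $\varphi(x_1),\dots,\varphi(x_m)$ all lying in the locally finite part $P$ of $G$, then for $w \in P \ast F_m$ the restriction $\varphi|_{P \ast F_m}$ already maps into $P$, is the identity on $P$, and still sends $w$ off the identity, so Theorem~\ref{loc nilp} follows with no extra work. Either way the single genuine obstacle is the wreath-product bookkeeping of the previous paragraph — controlling how the substituted free variables interact with the retained $P_k$-coefficients so that no coordinate cancels. The reason to expect this to be manageable is that, unlike in the finitely generated case, there are no competing constraints: one need not also arrange $\varphi$ to respect the relations that cut $G$ down from the full wreath tower and render it lacunary hyperbolic, so the required non-cancellation estimate should be cleaner, and the largeness conditions already built into the finite $p$-groups should suffice.
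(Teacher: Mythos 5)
You have picked out the right group --- the locally finite $p$-part of the group from Theorem \ref{main result}, which in the paper's notation is the normal subgroup $A(p,\textbf{c})$ of $G(p,\textbf{c})$ --- but the argument you propose for its MIF property has a genuine gap, and it is exactly the step the paper fills with a citation. The paper does not re-run any cancellation argument inside $A(p,\textbf{c})$: it invokes Lemma \ref{subnormal} (\cite[Prop.~5.4(c)]{H-O}), that every nontrivial subnormal subgroup of a MIF group is MIF, and the mechanism behind that lemma (Remark \ref{subnormal rem}) is the substitution $x \mapsto [x,n]$ for a fixed $n \in N \setminus \{1\}$: a would-be nontrivial mixed identity $w(x)=1$ over the normal subgroup $N$ becomes the nontrivial mixed identity $w([x,n])=1$ over $G$, because $[g,n]$ lies in $N$ for every $g \in G$. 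Your proposal never supplies this (or any other) bridge from ``$G$ is MIF'' to ``$A$ is MIF''.

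Concretely, two things fail in your sketch. First, the structural picture is wrong: $A$ is not an ascending union of finite $p$-groups built by iterated wreath products, but the direct limit of the infinitely generated groups $A_n = \langle a_i,\, i \in \mathbb{Z} \mid a_i^p=1,\, R_1,\ldots,R_n\rangle$, where the $R_i$ impose nilpotency relations on commutators whose indices lie in a bounded window; there are no wreath-product ``coordinates'' or ``supports'' for your general-position or probabilistic non-cancellation argument to attach to, and that argument is in any case only hoped for, not carried out. Second, your fallback --- restricting the homomorphisms manufactured in the proof of Proposition \ref{main prop} --- hinges on the witnesses being choosable inside the locally finite part, but the construction only produces $g_k \in G_k$ with $w_{k,G_k}(g_k) \neq 1$ via the MIF property of the hyperbolic group $G_k$; nothing places $g_k$ in $A_k$, and the MIF property of $G$ by itself does not yield witnesses confined to a proper subgroup. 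The conjugation trick $x \mapsto [x,n]$ is precisely what forces the substituted value into the normal subgroup, and that is the missing idea.
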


\noindent We end the paper by examining two other reasonable candidates for examples of finitely generated MIF amenable groups, the Grigorchuk group and the identity-free amenable groups of infinite girth constructed by Akhmedov in \cite{A}, and show that in each case, the group in question satisfies a nontrivial mixed identity.


\section{Preliminaries}


\subsection{Mixed identities}

Given a group $G$, the following remark allows us to simplify the problem of showing that $G$ is MIF by considering only mixed identities arising from elements of $G \ast \langle x \rangle$.

\begin{rem}
Observe that for any nontrivial element $g \in G$, the extension of the identity map on $G$ to a map $$\iota: G \ast F( x_1, x_2, \ldots ) \rightarrow G \ast \langle x \rangle$$ given by sending $x_i \mapsto x^i g x^i$ is an embedding. As a result, $G$ satisfies a nontrivial mixed identity $u=1$ for some $u \in G \ast F ( x_1, \ldots, x_n )$ if and only if $G$ satisfies a nontrivial (single-variable) mixed identity $v=1$ for some $v \in G \ast \langle x \rangle$.
\end{rem}

It will be helpful when considering single-variable mixed identities to have the following notation. Given an element $w(x) \in G \ast \langle x \rangle$ and an element $g \in G$, let $w(g)$ denote the image of $w(x)$ in $G$ under the homomorphism $G \ast \langle x \rangle \rightarrow G$ given by taking the identity map on $G$ and sending $x \mapsto g$.

\subsection{An outline of the Ol'shanskii-Osin-Sapir construction}\label{OOS construction}

\begin{defn}
Let $G$ and $H$ be groups with a homomorphism $\varepsilon : G \rightarrow H$, and let $S$ a generating set for $G$. Provided $\varepsilon$ is not injective, the \textit{injectivity radius} of $\varepsilon$ with respect to $S$ is the maximal radius of a ball in the Cayley graph $\Gamma (G,S)$ about $1_G$ on which $\varepsilon$ is injective. If $\varepsilon$ is injective, then the injectivity radius of $\varepsilon$ is infinity.
\end{defn}

\noindent A group $G$ is called \textit{lacunary hyperbolic} if at least one of its asymptotic cones is an $\mathbb{R}$-tree. Equivalently (via \cite[Thm. 3.3]{O-O-S}),

\begin{defn}\label{lac hyp defn} A group $G$ is called \textit{lacunary hyperbolic} if there exists of groups $G_i$ with corresponding finite generating sets $S_i$ and epimorphisms $\varepsilon_i : G_i \twoheadrightarrow G_{i+1}$ such that the following conditions hold.
\begin{itemize}
\item[(i)] $G$ is the direct limit of the sequence
\begin{equation*}
G_0 \overset{\varepsilon_0} \twoheadrightarrow G_1 \overset{\varepsilon_1} \twoheadrightarrow G_2
\overset{\varepsilon_2} \twoheadrightarrow \ldots
\end{equation*}
\item[(ii)] $\varepsilon(S_i)=S_{i+1}$.
\item[(iii)] Each $G_i$ is $\delta_i$-hyperbolic with respect to the generating set $S_i$.
\item[(iv)] Let $r(\varepsilon_i)$ denote the injectivity radius of $\varepsilon_i$ with respect to the generating set $S_i$. Then $\delta_i=o(r(\varepsilon_i))$.
\end{itemize}
\end{defn}

\noindent In \cite{O-O-S}, Ol'shanskii, Osin, and Sapir provide a construction of elementary amenable groups satisfying the direct limit characterization of lacunary hyperbolicity above. As the main result of this paper arises as a modification of this construction, we present a brief overview the construction below, highlighting the key ingredients necessary to make this modification.

\begin{rem}
It should be noted that while the goal of the construction in \cite{O-O-S} is a direct limit satisfying the conditions of Definition \ref{lac hyp defn}, the guarantee of these conditions is an optional element of the direct limit construction in this paper. (See Remark \ref{lac hyp rem} for a modification of the construction in this paper which ensures that these conditions are met so that the resulting group is lacunary hyperbolic.)
\end{rem}

Given a prime number $p$ and a non-decreasing sequence $\textbf{c}$ of natural numbers $c_1 \leq c_2 \leq \ldots$, define the following. Let $A_0 = A_0 (p)$ be the group $$A_0 = \langle a_i, \, i \in \mathbb{Z} \, | \, a_i ^p =1, \, i \in \mathbb{Z}\rangle$$ and let $R_n$ denote the collection of relations of the form $$[\ldots[a_{i_0},a_{i_1}],\ldots,a_{i_{c_n}}]=1$$ for all commutators with $\max\limits_{j,k} |i_j - i_k| \leq n$. For $n \in \mathbb{N}$, define $A_n=A_n(p,c_1, \ldots,c_n)$ by $$A_n = \langle A_0 \, | \, \cup_{i=1} ^n R_i\rangle$$ or equivalently, $$A_n = \langle A_{n-1} \, | \, R_n \rangle.$$ Let $A=A(p,\textbf{c})$ be the group $$A = \langle A_0 \, | \, \cup_{i=1} ^\infty R_i\rangle.$$ Notice firstly that the group $A$ is a locally nilpotent group generated by elements of order $p$ and is thus a locally finite $p$-group. Indeed, every finitely generated subgroup of $A$ is contained in a subgroup $B= \langle a_{-N}, \ldots, a_{N} \rangle$ for some $N \in \mathbb{N}$. Since $B$ is a nilpotent group generated by finitely many elements of finite order, it is a finite group, and moreover, since all of its (nontrivial) generators are of order $p$, $B$ is its own unique Sylow $p$-subgroup and is thus a $p$ group.

Secondly, observe that $A_n$ and $A$ admit automorphisms $\varphi_n$ and $\varphi$ respectively, both given by extending the map on generators $a_i \rightarrow a_{i+1}, \, i \in \mathbb{Z}$. Define the group $G=G(p,\textbf{c})$ to be the extension of $A$ by $\varphi$, i.e. $$G= \langle A, t \, | \, t a_i t^{-1} = \varphi(a_i), i \in \mathbb{Z} \rangle.$$ Then $G$ is clearly $2$-generated ($G=\langle a_0, t \rangle$), and since $G$ is (locally finite $p$-group)-by-(infinite cyclic), it is elementary amenable.

Similarly, for $n \in \mathbb{N} \cup \{0\}$, define the group $G_n=G_n(p,c_1,\ldots,c_n$) to be
\begin{equation}\label{Gn}
G_n= \langle A_n, t \, | \, t a_i t^{-1} = \varphi_n(a_i), i \in \mathbb{Z} \rangle
\end{equation}
and observe that for each $n$, the natural quotient map $A_n \twoheadrightarrow A_{n+1}$ extends via the identity on $t$ to a map $\varepsilon_n : G_n \twoheadrightarrow G_{n+1}$ so that the group $G$ is the direct limit of the sequence
\begin{equation}\label{direct limit}
G_0 \overset{\varepsilon_0} \twoheadrightarrow G_1 \overset{\varepsilon_1} \twoheadrightarrow G_2
\overset{\varepsilon_2} \twoheadrightarrow \ldots
\end{equation}

For each $n \in \mathbb{N}$, define $S_n$ to be the generating set $\{a_0,t\}$ of $G_n$. In this framework, the key result in the construction from \cite{O-O-S} is the following lemma:

\begin{lem}\label{growth condition}
(\cite[Lem. 3.24]{O-O-S}) The groups $G_n$ are hyperbolic, and, provided the sequence $\textbf{c}$ grows fast enough, the sequence (\ref{direct limit}) (with generating sets $S_n=\{a_0, t\}$ for each $G_n$) satisfies all conditions of Definition \ref{lac hyp defn} so that the direct limit of this sequence, $G(p,\textbf{c})$, is lacunary hyperbolic.
\end{lem}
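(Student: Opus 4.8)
The plan is to follow the argument of \cite{O-O-S}; the statement is \cite[Lem.~3.24]{O-O-S}, so strictly one simply cites it, but for the benefit of the modification carried out in the sequel I would organize the proof around two essentially independent points: (A) for each $n$ the group $G_n$ is hyperbolic with respect to $S_n=\{a_0,t\}$, with some hyperbolicity constant $\delta_n$ that is determined by $p$ and $c_1,\dots,c_n$ alone; and (B) the ``grows fast enough'' hypothesis can be met, because $r(\varepsilon_n)$ can be made arbitrarily large without disturbing $\delta_n$.

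For (A) --- which I expect to be the main obstacle --- the key structural input is the special form of $A_n$. Since only the relation sets $R_1,\dots,R_n$ have been imposed, each of which kills iterated commutators whose index set has diameter at most $n$, the group $A_n$ is built from the subgroups $V_j=\langle a_j,a_{j+1},\dots,a_{j+n}\rangle$: each $V_j$ is generated by $n+1$ elements of order $p$ and is nilpotent of class at most $c_n$, hence finite, and $A_n$ is an iterated amalgam of the $V_j$ over finite subgroups, so in particular it acts on a line-shaped tree with finite stabilizers. The shift automorphism $\varphi_n$ translates this configuration, and the hyperbolicity of the mapping torus $G_n=A_n\rtimes_{\varphi_n}\mathbb Z$ with respect to $S_n$ is then proved in \cite{O-O-S} by a direct geometric analysis; this is the technical heart of that paper and I would not reproduce it. The feature to retain is that $G_n$, and hence any hyperbolicity constant $\delta_n$ for $(G_n,S_n)$, depends only on $p$ and $c_1,\dots,c_n$, and in particular is unaffected by the later terms $c_{n+1},c_{n+2},\dots$ of the sequence.

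For (B): the kernel of $\varepsilon_n\colon G_n\twoheadrightarrow G_{n+1}$ is the normal closure of $R_{n+1}$, so every nontrivial element of $\ker\varepsilon_n$ has $S_n$-length at least that of the shortest relator in $R_{n+1}$; rewriting $a_i=t^i a_0 t^{-i}$ and using the shift-invariance of $R_{n+1}$ to move all indices into $\{0,1,\dots,n+1\}$, these relators are iterated commutators of weight $c_{n+1}+1$ in words of bounded length, so their $S_n$-length --- and therefore $r(\varepsilon_n)$, which depends only on $p,c_1,\dots,c_{n+1}$ --- tends to infinity as $c_{n+1}\to\infty$. (That $\ker\varepsilon_n\neq 1$, i.e.\ that $\varepsilon_n$ is a proper quotient so that the limit is infinite, is again checked in \cite{O-O-S}.) One then builds $\textbf{c}$ recursively: having chosen $c_1,\dots,c_n$, which fixes $\delta_n$, choose $c_{n+1}$ so large that $r(\varepsilon_n)\ge n\,\delta_n$. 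Then $\delta_n/r(\varepsilon_n)\to 0$, i.e.\ $\delta_n=o(r(\varepsilon_n))$, so condition~(iv) of Definition~\ref{lac hyp defn} holds; conditions (i)--(iii) hold by construction and by (A); hence $G(p,\textbf{c})$ is lacunary hyperbolic.
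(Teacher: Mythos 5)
The paper gives no proof of this lemma at all---it is quoted verbatim as \cite[Lem.~3.24]{O-O-S}---so your proposal, which likewise defers the technical content (hyperbolicity of $G_n$ and the growth of $r(\varepsilon_n)$ with $c_{n+1}$) to that reference and only supplies the bookkeeping of choosing $c_{n+1}$ after $\delta_n$ is fixed, is consistent with the paper and with the argument in \cite{O-O-S}. One caution: your assertion that every nontrivial element of $\ker\varepsilon_n$ has $S_n$-length at least that of the shortest relator in $R_{n+1}$ is false for general presentations (normal closures can contain elements much shorter than the relators), and in \cite{O-O-S} the lower bound on $r(\varepsilon_n)$ is instead extracted from the amalgam/normal-form structure of $A_n$, so that step should be attributed to the reference rather than stated as if it were automatic.
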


\begin{rem}\label{injective}
In particular, the proof of this lemma shows that given a finite subset $\mathcal{F}_n$ of $G_n$, any sufficiently large choice of $c_{n+1}$ guarantees that the map $\varepsilon_n : G_n \twoheadrightarrow G_{n+1}$ is injective on $\mathcal{F}_n$.
\end{rem}


\section{Proof of the main result}

Since for any prime number $p$ and any non-decreasing sequence $\textbf{c}$ of natural numbers the group $G(p,\textbf{c})$ is $2$-generated and elementary amenable, the proof of Theorem \ref{main result} is reduced to the following proposition.

\begin{prop}\label{main prop}
Given a prime number $p$, there exists a non-decreasing sequence $\textbf{c}$ of natural numbers $c_1 \leq c_2 \leq \ldots$ such that the group $G(p,\textbf{c})$ defined in Subsection \ref{OOS construction} is MIF.
\end{prop}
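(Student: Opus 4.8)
The plan is to construct the sequence $\textbf{c}$ recursively, interleaving two requirements: the injectivity condition from Remark~\ref{injective} (so that the direct limit behaves well and elements survive in the limit) and a ``diagonalization'' against all potential mixed identities. The crucial observation is that $G \ast \langle x \rangle$ is countable, so we may enumerate its nontrivial elements $w_1(x), w_2(x), \ldots$, and it suffices to arrange, for each $k$, that $w_k(g_k) \neq 1$ in $G$ for some $g_k \in G$. Since $G$ is the direct limit of the $G_n$ with injective transition maps on prescribed finite sets, it is enough to find, at some finite stage $G_n$, an element $g \in G_n$ with $w_k(g) \neq 1$ in $G_n$ (interpreting the coefficients of $w_k$ via their images in $G_n$, which we must ensure are nontrivial when they were nontrivial in $G$), and then choose $c_{n+1}, c_{n+2}, \ldots$ large enough that this non-triviality is preserved along the chain.

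So the heart of the matter is: given a nontrivial $w(x) \in G_n \ast \langle x \rangle$ (for $G_n$ hyperbolic, and after possibly passing to a further stage so that all the coefficients appearing in $w$ are nontrivial in $G_n$), find $g \in G_n$ with $w(g) \neq 1$. Here I would exploit the structure of $G_n$ as $A_n \rtimes \langle t \rangle$ together with the shift automorphism $\varphi_n$. A natural candidate is to take $g$ to be a high power of $t$, or of the form $a_0^{\pm 1} t^m$, and use that conjugation by $t^m$ shifts the $a_i$'s by $m$: for $m$ large, the various substituted blocks of $w(x)$ land in ``far apart'' regions of the indexing set $\mathbb{Z}$, where only the relations $R_1, \ldots, R_n$ (which only involve indices within distance $n$) constrain them. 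Writing $w(x)$ in normal form as an alternating product of powers of $x$ and elements of $G_n$, substituting $x = t^m$ and collecting the $t$-exponent gives a total power of $t$; if that total exponent is nonzero we are essentially done, and if it is zero the element lands in $A_n$ and one analyzes it as a product of conjugates $\varphi_n^{k}(\cdot)$ of the (nontrivial) coefficients, which for $m$ large enough have disjoint supports and therefore cannot cancel in the locally nilpotent group $A_n$. Some care is needed because the coefficients themselves need not be single generators $a_i$; but each lies in some $\langle a_{-N}, \ldots, a_N\rangle$, and shifting by a multiple of $2N{+}n{+}1$ separates consecutive blocks beyond the reach of the imposed commutator relations.

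The main obstacle I anticipate is exactly this last point: controlling cancellation in $A_n$, which is a nontrivial finitely-presented-at-each-stage nilpotent-ish quotient of the infinite free product of $\mathbb{Z}/p$'s, and verifying that ``sufficiently shifted'' copies of a nontrivial element cannot multiply to the identity. This should follow from a support/locality argument: there is a retraction or at least a quotient of $A_n$ onto the subgroup generated by a window of generators that is a free nilpotent-type object on which the pieces are visibly independent, or alternatively one uses that $A_n$ embeds suitably into a product over translates. I would phrase it as: if $b_1, \ldots, b_r \in A_n$ are each supported in $\{-N, \ldots, N\}$ (meaning each lies in $\langle a_{-N},\ldots,a_N\rangle$) and $b_j \neq 1$, then for $d$ a large enough multiple of $2N+n+1$ the product $\varphi_n^{0}(b_1)\varphi_n^{d}(b_2)\cdots\varphi_n^{(r-1)d}(b_r)$ is nontrivial, proved by projecting to the quotient killing all generators outside one translated window. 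Once this separation lemma is in hand, the diagonalization is routine bookkeeping, and the injectivity bookkeeping from Remark~\ref{injective} runs in parallel, yielding the required sequence $\textbf{c}$ and hence that $G(p,\textbf{c})$ is MIF.
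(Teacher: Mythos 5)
Your outer scaffolding --- enumerate the countably many candidate words, find a witness $g_k$ at a finite stage $G_k$, and use Remark~\ref{injective} to choose $c_{k+1}$ so that the witness survives to the direct limit --- is exactly the paper's argument. Where you diverge is at the heart of the matter: producing, for each nontrivial $w(x)\in G_k\ast\langle x\rangle$, an element $g$ with $w(g)\neq 1$. The paper gets this for free: each $G_k$ is non-elementary hyperbolic (Lemma~\ref{growth condition}) with trivial finite radical (Lemma~\ref{non-elem}), hence MIF by the Hull--Osin result quoted as Lemma~\ref{AH MIF}. You instead propose to construct the witness by hand, substituting $x=t^m$ or $x=a_0^{\pm 1}t^m$ and running a support-separation argument in $A_n$. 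That is a genuinely different (and more elementary, in principle) route, but as sketched it has a real gap.

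The gap is in your separation lemma. Projecting onto a single translated window $\{jd-N,\dots,jd+N\}$ (which is indeed a well-defined retraction, since every relator in $R_1\cup\dots\cup R_n$ is an iterated commutator and dies when any entry is killed) only detects nontriviality when the shifted blocks occupy pairwise distinct windows, each occurring once. For a general word the positions of the blocks are governed by the partial sums $m(\alpha_1+\dots+\alpha_j)$ plus the $t$-exponents of the coefficients, and these positions can repeat. For instance $w(x)=gxhx^{-1}g^{-1}xh^{-1}x^{-1}$ with $g,h\in A_n$ yields $w(t^m)=[g,\varphi_n^m(h)]$ up to conjugacy: every single-window projection kills it, yet it is (presumably) nontrivial. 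To handle such interleaved products you need the stronger locality statement that widely separated windows generate their \emph{free product} inside $A_n$, or an equivalent normal-form theorem for $A_n$; this does not follow from the retraction alone and is not established in your sketch (nor is it obvious --- it is the kind of structural fact about $A_n$ that \cite{O-O-S} has to work for). There is also the unresolved case analysis of which substitution ($t^m$ versus $a_0^{\pm1}t^m$) to use when the coefficients of $w$ are pure powers of $t$. None of this is fatal --- I expect the direct argument can be completed --- but as written the core step is an acknowledged obstacle rather than a proof, and the paper's appeal to Lemma~\ref{AH MIF} is what makes the whole construction short.
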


To prove this proposition, we will need two lemmas and the following definition.

\begin{defn}
Let $G$ be a non-elementary (i.e. not virtually cyclic) hyperbolic group. Then $G$ contains a unique, maximal finite normal subgroup called the \textit{finite radical} of $G$. (Existence of the finite radical follows from \cite[Prop. 1]{Ol}.)
\end{defn}

\noindent The following lemma is a simplification of \cite[Cor. 1.7]{H-O}.

\begin{lem}\label{AH MIF}
Let $G$ be a non-elementary hyperbolic group with trivial finite radical. Then G is MIF.
\end{lem}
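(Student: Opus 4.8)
The plan is to prove that a non-elementary hyperbolic group $G$ with trivial finite radical is MIF, by showing that for every nontrivial $w(x) \in G \ast \langle x \rangle$ there is an element $g \in G$ with $w(g) \neq 1$. Write $w(x) = g_0 x^{\epsilon_1} g_1 x^{\epsilon_2} \cdots g_{k-1} x^{\epsilon_k} g_k$ in reduced form, where each $g_i \in G$ and each $\epsilon_i \in \mathbb{Z} \setminus \{0\}$ (and $g_i \neq 1$ for $0 < i < k$). The idea is to substitute for $x$ a suitable large power of a "generic" hyperbolic element $h$ of $G$, so that the resulting product in $G$ has a long, essentially geodesic, unreduced-cancellation-free form and hence cannot equal $1$.

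First I would invoke the structure theory of non-elementary hyperbolic groups: since $G$ has trivial finite radical and is non-elementary, it contains a loxodromic element $h$ whose centralizer (equivalently, whose maximal elementary subgroup $E(h)$) is infinite cyclic, generated by $h$ itself — this uses that elements of finite order are controlled by the finite radical, so here it can be taken trivial, giving $E(h) = \langle h \rangle$. More importantly, I would arrange (by WPD/acylindricity of the action of $G$ on its Cayley graph, or directly by north-south dynamics on the boundary $\partial G$) that for a generic choice of $h$ and of large exponent $N$, conjugating: the elements $g_i$ "move" the axis of $h$ off itself enough that there is no back-tracking. Concretely, one wants: for each $i$ with $g_i \notin \langle h \rangle$ (and this can be ensured by choosing $h$ not to interact with the finite list $g_0,\ldots,g_k$), the translates $g_i \cdot \mathrm{Axis}(h)$ and $\mathrm{Axis}(h)$ fellow-travel only along a bounded segment. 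This is where the hyperbolic geometry does the work.

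Then the key step is a ping-pong / local-geodesic argument: substituting $x = h^N$ for $N$ large, the word $w(h^N) = g_0 h^{N\epsilon_1} g_1 h^{N\epsilon_2} \cdots g_k$ traces a path in the Cayley graph that is a concatenation of long geodesic pieces (the $h^{N\epsilon_j}$ pieces run along translates of the axis) glued by bounded connectors (the $g_i$ pieces), and for $N \gg \delta, |g_i|$ this path is a $(\lambda,c)$-quasigeodesic. By the Morse lemma it stays within bounded distance of a genuine geodesic of length comparable to $N \sum |\epsilon_j| \to \infty$, so its endpoints are distinct, i.e. $w(h^N) \neq 1$. One must handle separately the degenerate cases: if $k = 0$ then $w = g_0 \neq 1$ trivially; if some consecutive $g_i$ happen to lie in $\langle h \rangle$ one re-chooses $h$ — since $w$ is a fixed reduced word, a generic loxodromic $h$ avoids all the finitely many obstructions (this is where non-elementarity is essential: there are infinitely many independent loxodromic axes to choose from, and the reduced form of $w$ guarantees that the relevant $g_i$ are nontrivial). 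The trivial-finite-radical hypothesis enters to kill the only other way cancellation could persist, namely torsion "wrapping" phenomena around the axis.

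I expect the main obstacle to be the bookkeeping that makes the concatenation genuinely quasigeodesic uniformly in the (finitely many but arbitrary) exponents $\epsilon_j$: one needs a single threshold $N$ working for all pieces simultaneously, and one needs the estimate to survive the connectors $g_i$ not pushing the path back along the previous axis segment — i.e. a lower bound on the "exit angle" at each $g_i$. I would extract this from the acylindricity constant of the $G$-action (bounded overlap of axis translates) together with the hyperbolicity constant $\delta$, and then choose $N$ larger than a function of $\delta$, the acylindricity parameters, and $\max_i |g_i|_{S}$. An alternative, possibly cleaner, route is to cite the boundary dynamics directly: pick $h$ loxodromic with attracting/repelling fixed points $h^{\pm}$ on $\partial G$ disjoint from the finite set $\{ g_i^{\pm 1} h^{\pm} \}$, use north-south dynamics to get a ping-pong pair, and observe that $w(h^N)$ for large $N$ maps a small neighborhood of $h^-$ strictly inside one of $h^{+}$, hence acts nontrivially on $\partial G$ and is therefore $\neq 1$ in $G$.
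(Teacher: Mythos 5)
Your outline is essentially a correct plan, but note first that the paper does not prove this lemma at all: it is quoted verbatim as a special case of Hull--Osin \cite[Cor.~1.7]{H-O}, which establishes MIF for acylindrically hyperbolic groups with trivial finite radical. What you have written is, in effect, a sketch of how that cited result is proved once specialized to hyperbolic groups: pick a loxodromic $h$ with $E(h)=\langle h\rangle$ (possible because the finite radical is trivial, by Ol'shanskii's results on elementary subgroups) that is non-commensurable with the finitely many coefficients $g_i$ of the reduced word $w$, substitute $x=h^N$, and run a broken-geodesic/ping-pong estimate to conclude that $|w(h^N)|$ grows linearly in $N$, hence $w(h^N)\neq 1$. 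Your sketch correctly locates where each hypothesis enters (non-elementarity gives infinitely many independent axes to choose from; trivial finite radical kills both torsion wrapping and axis-inverting elements, so that $g_i\notin\langle h\rangle$ really does imply $g_i\notin E(h)$ and hence bounded overlap of axis translates; reducedness of $w$ guarantees the interior $g_i$ are nontrivial). The two points where the proposal is only a gesture rather than a proof are exactly the two standard but nontrivial inputs: the existence of loxodromics with $E(h)=\langle h\rangle$ avoiding a prescribed finite set of commensurability classes, and the uniform quasigeodesicity constant for concatenations of long axis segments joined by connectors not in $E(h)$ --- the latter is the quantitative bookkeeping you yourself flag as the main obstacle. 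So your route is sound and more self-contained than the paper's, at the cost of carrying out the hyperbolic-geometry estimates that the citation to \cite{H-O} lets the paper skip entirely; if you intend to write it out in full, you should either import those two facts with precise references or be prepared to prove a Morse-type lemma for the specific broken paths you construct.
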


\begin{rem}
It should be noted that trivial finite radical is a necessary condition for a non-elementary hyperbolic group to be MIF. Indeed, for any group $G$ with nontrivial finite normal subgroup $N$ of size $n$ and any $g \in N \backslash \{1\}$, $G$ satisfies the nontrivial mixed identity $[x^{n!}, g]=1$.
\end{rem}

\begin{lem}\label{non-elem}
For each $n \in \mathbb{N}$, the groups $G_n$ defined by (\ref{Gn}) are non-elementary hyperbolic with trivial finite radical.
\end{lem}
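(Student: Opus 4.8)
The plan is to analyze the structure of each $G_n$ directly from its definition as the ascending HNN-type extension $G_n = \langle A_n, t \mid t a_i t^{-1} = \varphi_n(a_i)\rangle$. Since $\varphi_n$ is an automorphism of $A_n$ (not merely an injective endomorphism), $G_n$ is in fact a semidirect product $A_n \rtimes_{\varphi_n} \mathbb{Z}$. I would first establish that $G_n$ is hyperbolic: this is already asserted in Lemma \ref{growth condition} (it is part of \cite[Lem.~3.24]{O-O-S}), so I may simply cite it. The real content is non-elementarity and triviality of the finite radical.

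\emph{Non-elementarity.} A virtually cyclic group has a cyclic subgroup of finite index, hence its locally finite subgroups are finite. So it suffices to show $A_n$ is infinite. The group $A_n$ is obtained from $A_0 = \bigast_{i \in \mathbb{Z}} \mathbb{Z}/p$ (restricted to the relations making each $a_i$ of order $p$, i.e. the free product of the cyclic groups $\langle a_i \rangle$) by imposing the relators in $\cup_{i=1}^n R_i$, each of which is an iterated commutator of length $c_j + 1$ among generators whose indices lie in a window of size $\le j \le n$. The key observation is that these relators are "local": the subgroup $B_N = \langle a_{-N}, \ldots, a_N\rangle \le A_n$ is the free nilpotent-of-some-bounded-class product (more precisely, the group on $2N+1$ generators of order $p$ subject only to those iterated commutator relations supported in $[-N,N]$), and for $N$ large this is a nontrivial finite $p$-group whose order grows without bound as $N \to \infty$ — e.g. already the abelianization $(\mathbb{Z}/p)^{2N+1}$ survives since no relator in any $R_j$ lies in the commutator subgroup's... wait, the relators ARE commutators, so they die in the abelianization, giving $A_n^{\mathrm{ab}} \twoheadrightarrow \bigoplus_{i\in\mathbb{Z}} \mathbb{Z}/p$, which is infinite. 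Hence $A_n$ is infinite and $G_n$ is non-elementary.

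\emph{Trivial finite radical.} Let $N \trianglelefteq G_n$ be the finite radical. Since $\langle t \rangle \cong \mathbb{Z}$ is infinite and intersects $N$ trivially (as $N$ is finite), and since $G_n / A_n \cong \mathbb{Z}$ is torsion-free, any finite normal subgroup must lie in $A_n$. So $N \le A_n$, and $N$ is a finite subgroup of $A_n$ normalized by $t$, hence invariant under $\varphi_n$. Now $N$, being finite, is contained in some $B_M = \langle a_{-M},\ldots,a_M\rangle$; but since $N$ is $\varphi_n$-invariant and $\varphi_n$ shifts indices by $1$, $N$ would also lie in $\langle a_{-M+1},\ldots,a_{M+1}\rangle$, and iterating in both directions, $N \le \bigcap_{k} \langle a_{-M+k}, \ldots, a_{M+k}\rangle$ along shifts — more carefully, $N$ generated by a finite set is supported in a finite window $W$, but $\varphi_n$-invariance forces the support to be shift-invariant, hence empty unless $N = 1$. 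Making "support" precise is the main obstacle: I would define, for $g \in A_n$, the support of $g$ via its image in the abelianization (or via a retraction $A_n \twoheadrightarrow B_M$ that kills all $a_i$ with $|i| > M$, which exists because the defining relators of $A_n$ are mapped to relators of $B_M$ or to $1$), and argue that a nontrivial $g$ has nonempty, bounded support in the abelianization, incompatible with generating a shift-invariant subgroup. Once $N \le A_n$ is shown $\varphi_n$-invariant and finite, this pigeonhole/support argument on the infinite abelianization $A_n^{\mathrm{ab}} \cong \bigoplus_{i \in \mathbb{Z}} \mathbb{Z}/p$ closes the proof: any nonzero shift-invariant subgroup of $\bigoplus_{i\in\mathbb{Z}} \mathbb{Z}/p$ is infinite, contradicting finiteness of $N$.

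The step I expect to be genuinely delicate is verifying that the finite radical is contained in $A_n$ rather than merely intersecting it in finite index — but this follows cleanly because $G_n/A_n$ is torsion-free, so a finite normal subgroup maps to the trivial subgroup of $\mathbb{Z}$. The remaining subtlety is purely about the combinatorics of the relators $R_j$ and can be handled by passing to the abelianization, where everything becomes transparent.
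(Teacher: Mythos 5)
There are two genuine gaps, one in each half of your argument. For non-elementarity, your reduction rests on the claim that it suffices to show $A_n$ is infinite because ``locally finite subgroups of virtually cyclic groups are finite'' --- but $A_n$ is \emph{not} locally finite (nor torsion) for finite $n$. The relators in $R_1\cup\ldots\cup R_n$ only constrain commutators of generators whose indices lie within distance $n$ of each other, so for example $\langle a_0,a_{n+1}\rangle\cong \mathbb{Z}/p\ast\mathbb{Z}/p$, which contains elements of infinite order. (Only the direct limit $A$ is locally finite; indeed, since $G_n$ is hyperbolic and $A_n$ is infinite, $A_n$ \emph{cannot} be locally finite.) Your computation of the abelianization still rescues the conclusion by a different route --- $A_n^{\mathrm{ab}}\cong\bigoplus_{i\in\mathbb{Z}}\mathbb{Z}/p$ is an infinite torsion abelian quotient, so $A_n$ is not even finitely generated and hence $G_n$ is not virtually cyclic --- but the reduction as you stated it is wrong. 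The paper instead surjects $G_n$ onto the wreath product $\wreath{(\mathbb{Z}/p\mathbb{Z})}{\mathbb{Z}}$ by adding all relations $[a_i,a_j]=1$.

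For the finite radical, the step ``finite normal subgroups lie in $A_n$ because $G_n/A_n\cong\mathbb{Z}$ is torsion-free'' is fine and matches the paper. But the closing argument you actually commit to --- passing to $A_n^{\mathrm{ab}}$ and noting that a nonzero shift-invariant subgroup of $\bigoplus_{i\in\mathbb{Z}}\mathbb{Z}/p$ is infinite --- does not close the proof: a finite normal subgroup $N$ could lie entirely in $[A_n,A_n]$ and then has \emph{trivial} image in the abelianization, so shift-invariance of its image tells you nothing. The retraction you mention in passing is the correct tool and must be applied at the level of $A_n$ itself, not its abelianization: the endomorphism killing all $a_i$ with $|i|>M$ is well defined (each relator maps to a relator or to $1$), restricts to the identity on $\langle a_{-M},\ldots,a_M\rangle$, and kills $\langle a_{M+1},\ldots,a_{3M+1}\rangle$, so these two subgroups intersect trivially. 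This is exactly the paper's argument: $H\leq\langle a_{-N},\ldots,a_N\rangle$ and $H\cap t^{-(2N+1)}Ht^{2N+1}=\{1\}$, so a nontrivial such $H$ cannot be normal. You had the right idea in reserve but settled on the version that fails.
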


\begin{proof}
For each $n \in \mathbb{N}$, the group $G_n$ admits an epimorphism onto the wreath product $\wreath{(\mathbb{Z}/p\mathbb{Z})}{\mathbb{Z}}$ (given by adding the relations $[a_i,a_j]$ for all pairs $i,j \in \mathbb{Z}$) and so $G_n$ is not virtually cyclic. The group $G_n$ is hyperbolic by Lemma \ref{growth condition}.

To see that $G_n$ has no nontrivial finite normal subgroups, first observe that any element $g$ of $G_n$ may be written as $$g=g_1 t^{\alpha_1} g_2 t^{\alpha_2} \ldots g_k t ^{\alpha_k},$$ where $g_1, g_2, \ldots, g_k \in A_n$ and $\alpha_1, \alpha_2 \ldots, \alpha_k \in \mathbb{Z}$. Rewriting the element as $$g=g_1 \left(t^{\beta_1} g_2 t^{-\beta_1}\right)\left((t^{\beta_2} g_3 t^{-(\beta_2)}\right) \ldots \left(t^{\beta_{k-1}} g_k t ^{-\beta_{k-1}}\right) t^{\beta_k},$$ where $\beta_j = \sum\limits_{i=1}^j \alpha_i$, we observe that each of the elements $g_1, \left(t^{\beta_1} g_2 t^{-\beta_1}\right), \ldots, \left(t^{\beta_{k-1}} g_k t ^{-\beta_{k-1}}\right)$ is an element of $A_n$ so that $g=at^{\beta_k}$ for some element $a \in A_n$.

Now observe that if $\beta_k = 0$, then $g \in A_n$, and if $\beta_k \neq 0$, then the image of $g$ in $G_n$ mod the normal closure of $A_n$ is of infinite order, and thus $g$ is of infinite order. Thus all finite-order elements of $G_n$ are contained in $A_n$. So if $H$ is a finite subgroup of $G_n$, then $H \leq \langle a_{-N}, \ldots, a_{N} \rangle$ for some $N \in \mathbb{N}$. But such a subgroup cannot be both normal in $G_n$ and nontrivial, since $H \cap t^{-(2N+1)}Ht^{2N+1} = \{1\}$.
\end{proof}

We can now prove Proposition \ref{main prop}.

\begin{proof}[Proof of Proposition \ref{main prop}]
Set $c_1=1$ and define the set $\mathcal{F}_1=\{1_{G_1}\}$. Fix an enumeration $\{w_i (x)\}_{i \in \mathbb{N}}$ of the elements of $G_0 \ast \langle x \rangle$.

Now given $c_k$, the resulting group $G_k$, and a finite subset $\mathcal{F}_k \subseteq G_k$, choose $c_{k+1}$ and $\mathcal{F}_{k+1}$ as follows. Let $$\pi_k: G_0 \ast \langle x \rangle \rightarrow G_k \ast \langle x \rangle$$ be the extension of the homomorphism $\varepsilon_{k-1} \circ \ldots \circ \varepsilon_0 : G_0 \rightarrow G_k$ given by sending $x \mapsto x$. Let $w_{k,G_k}(x)$ denote the image of $w_k(x)$ under $\pi_k$. If $w_{k,G_k}(x)\neq 1$, first observe that by by Lemma \ref{non-elem}, $G_k$ is non-elementary hyperbolic with trivial finite radical. By Lemma \ref{AH MIF}, $G_k$ is MIF, so there exists $g_k \in G_k$ such that $w_{k,G_k}(g_k) \neq 1$. In this case, add $w_{k,G_k}(g_k)$ to $\mathcal{F}_k$.

Now choose $c_{k+1}$ large enough so that the resulting $\varepsilon_k$ is injective on $\mathcal{F}_k$. (This is possible by Remark \ref{injective}.) Define $$\mathcal{F}_{k+1} = \varepsilon_{k}(\mathcal{F}_k).$$

\noindent After choosing a sequence $\textbf{c}$ in the above manner, It remains to show that the group $G=G(p,\textbf{c})$ is MIF.

For each $k \in \mathbb{N}$ define the homomorphism $$\sigma_k:  G_k \ast \langle x \rangle \rightarrow G \ast \langle x \rangle$$ to be the extension of the natural quotient map $G_k \rightarrow G$ given by sending $x \mapsto x$. To see that $G$ is MIF, first observe that if $w(x) \in (G \ast \langle x \rangle) \backslash \{1\}$, then there exists some $i \in \mathbb{N}$ such that $\sigma_0(w_i(x))=w(x)$, and furthermore, since $w(x)$ is nontrivial, $w_{i,G_i}(x) \neq 1$. By construction, $(\varepsilon_k \circ \ldots \circ \varepsilon_i )(w_{i,G_i}(g_i))$ is in $\mathcal{F}_{k+1} \backslash \{1\}$ for every $k \geq i$, so in particular, $\sigma_i(w_{i,G_i}(g_i)) \neq 1$. Since $\sigma_i(w_{i,G_i}(x))=w(x)$ and $\sigma_i$ is a homomorphism, we have that $\sigma_i(w_{i,G_i}(g_i))=w(\sigma_i(g_i))$. Thus $w(\sigma_i(g_i)) \neq 1$, and so $w(x)=1$ is not a mixed identity on $G$. Since $w(x)$ was arbitrary, this shows that $G$ is MIF.
\end{proof}

\begin{rem}\label{lac hyp rem}
The above construction may be modified so that the resulting group $G(p,\textbf{c})$ is lacunary hyperbolic. To do so, begin by fixing a function $f: \mathbb{N} \rightarrow \mathbb{N}$ so that $n = o(f(n))$. Then, during the step at which $c_{k+1}$ is to be chosen, make the following alteration. Let $\delta_k$ denote the hyperbolicity constant of $G_k$ with respect to the generating set $S_k=\{a_0,t\}$, and let $r(\varepsilon_k)$ denote the injectivity radius of $\varepsilon_k$ with respect to the generating set $S_k$. Now choose $c_{k+1}$ large enough so that in addition to being injective on $\mathcal{F}_k$, the resulting $\varepsilon_k$ satisfies $r(\varepsilon_k) \geq f(\delta_k)$. Define $$\mathcal{F}_{k+1} = \varepsilon_{k}(\mathcal{F}_k)$$ as before, and proceed in the same way. Then, after choosing the sequence $\textbf{c}$, observe that the resulting group $G(p,\textbf{c})$ is lacunary hyperbolic by Lemma \ref{growth condition}.
\end{rem}

To prove Theorem \ref{loc nilp}, we need the following result.

\begin{lem}\label{subnormal}
(\cite[Prop. 5.4(c)]{H-O}) Every nontrivial subnormal subgroup of a MIF group $G$ is also MIF.
\end{lem}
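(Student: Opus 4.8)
The plan is to reduce immediately to the case of a normal subgroup and then transport the defining property of MIF across the inclusion $N \hookrightarrow G$ by a substitution trick. Given a subnormal chain $H = H_0 \triangleleft H_1 \triangleleft \cdots \triangleleft H_k = G$, every $H_i$ contains the nontrivial group $H$ and is therefore itself nontrivial, so it suffices to prove the single assertion that a \emph{nontrivial normal} subgroup $N$ of a MIF group $G$ is MIF; the subnormal case then follows by descending induction along the chain, applying this assertion to $H_i \triangleleft H_{i+1}$ with $H_{i+1}$ known to be MIF from the previous step. For the normal case I would first record two preliminary facts: by the single-variable reduction noted in Subsection 2.1 it is enough to show that for every nontrivial $w(x) \in N \ast \langle x \rangle$ there is an element $h \in N$ with $w(h) \neq 1$; and since MIF groups are ICC (as recalled in the introduction), the $G$-conjugacy class of any nontrivial element of $N$ is infinite and contained in $N$, so $N$ is infinite.

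The substitution trick is the heart of the argument. Fix a nontrivial $w(x) \in N \ast \langle x \rangle$, written in reduced form $w = n_0 x^{e_1} n_1 \cdots x^{e_k} n_k$ with $n_i \in N$; if $k = 0$ then $w = n_0 \in N \setminus \{1\}$ and $w(h) = n_0 \neq 1$ for every $h$, so assume $k \geq 1$. Choose an element $a \in N$ (to be constrained below) and define the homomorphism $\iota : N \ast \langle x \rangle \to G \ast \langle x \rangle$ that is the identity on $N$ and sends $x \mapsto [x,a]$. This substitution is engineered to have two features. First, for any $g \in G$ the evaluation of $\iota(x)$ at $x = g$ is $[g,a] = (gag^{-1})a^{-1}$, which lies in $N$ because $N$ is normal; the use of the \emph{commutator} rather than the plain conjugate $xax^{-1}$ is essential here, since when $N$ is a torsion group (as in our application) $xax^{-1}$ has finite order in $G \ast \langle x \rangle$ and the corresponding substitution collapses. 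Second, I claim that for a suitable choice of $a$ the element $\iota(w)$ is nontrivial in $G \ast \langle x \rangle$. Granting this claim the argument closes quickly: $\iota(w) \neq 1$ together with the MIF property of $G$ gives an element $g \in G$ at which $\iota(w)$ evaluates nontrivially, and since evaluation at $x = g$ precomposed with $\iota$ is exactly the map $w \mapsto w([g,a])$, we obtain $w(h) \neq 1$ for $h = [g,a] \in N$.

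The main obstacle is verifying the claim that $\iota(w) \neq 1$, which is a normal-form computation in the free product $G \ast \langle x \rangle$. Because $[x,a] = xax^{-1}a^{-1}$ is cyclically reduced of syllable length four it has infinite order, and each power $[x,a]^{e_i}$ reduces to an alternating word of syllable length $4|e_i|$ whose first and last syllables are determined by the sign of $e_i$. Substituting these into $w = n_0 x^{e_1} n_1 \cdots x^{e_k} n_k$, the only possible cancellations occur at the junctions where a power $[x,a]^{e_i}$ meets the letter $n_i$, and a short case analysis on the signs of consecutive exponents shows that each such junction merges a bounded block of $N$-syllables whose product is nontrivial precisely when $n_i \notin \{a, a^{-1}\}$. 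Thus if $a$ is chosen outside the finite set $\{1\} \cup \{ n_i^{\pm 1} : 0 \leq i \leq k \}$ --- possible since $N$ is infinite --- no cancellation reaches the $x$-syllables, $\iota(w)$ remains reduced, and in particular $\iota(w) \neq 1$. I expect this junction bookkeeping, together with the observation that $a$ may be chosen depending on $w$ (so that no single embedding $N \ast \langle x \rangle \hookrightarrow G \ast \langle x \rangle$ need be produced), to be the only delicate point; everything else is formal.
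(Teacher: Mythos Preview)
The paper does not prove this lemma itself but cites \cite[Prop.~5.4(c)]{H-O}; as recorded in Remark~\ref{subnormal rem}, the key step there is exactly your commutator substitution $x\mapsto[x,a]$ with $a\in N$, so your argument follows the intended route and is correct. Your additional care in choosing $a$ depending on $w$ (outside the finite set $\{n_i^{\pm1}\}$, possible since $N$ is infinite by the ICC property of $G$) makes the normal-form verification that $\iota(w)\neq 1$ transparent.
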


\begin{rem}\label{subnormal rem}
In particular, the proof of \cite[Prop. 5.4(c)]{H-O} notes that if $N$ is a normal subgroup of $G$ satisfying the nontrivial mixed identity $w(x)=1$ for some $w(x) \in N \ast \langle x \rangle$, then for any $n \in N \backslash \{1\}$, $G$ satisfies the nontrivial mixed identity $w([x,n])=1$.
\end{rem}

\begin{proof}[Proof of Theorem \ref{loc nilp}]
Given a prime number $p$, Proposition \ref{main prop} yields a sequence $\textbf{c}$ of natural numbers such that the group $G(p,\textbf{c})$ is MIF. Now observe that the (nontrivial) locally finite $p$-group $A(p,\textbf{c})$ is normal in $G(p,\textbf{c})$, so by Lemma \ref{subnormal}, $A(p,\textbf{c})$ is MIF.
\end{proof}

\section{Notable examples of finitely generated amenable groups which are not MIF}

In this section, we examine two other reasonable candidates for examples of finitely generated MIF amenable groups and explain why they fail to be MIF.

\subsection{The Grigorchuk group}

Given a binary rooted tree $T_2$, we may think of the nodes of $T_2$ as finite binary strings (where the root is represented by the empty string). The \textit{Grigorchuk group} $G$ is defined to be the subgroup of $Aut(T_2)$ generated by elements $a$, $b$, $c$, and $d$ whose actions on binary strings $w \in \{0,1\}^*$ are as follows.
\begin{alignat*}{4}
&a(0w)&&=1w \hspace{1cm}&&a(1w)&&=0w \\
&b(0w)&&=0a(w) &&b(1w)&&=1c(w) \\
&c(0w)&&=0a(w) &&c(1w)&&=1d(w) \\
&d(0w)&&=0w &&d(1w)&&=1b(w)
\end{alignat*}
The Grigorchuk group was initially constructed in \cite{G1} and was shown in \cite{G2} to be the first known example of a finitely generated group with intermediate growth. Notably, it is an example of a group which is amenable but not elementary amenable. (Amenability follows from subexponential growth, while Chou shows in \cite{C} that elementary amenable groups have either polynomial or exponential growth.)

\begin{prop}
The Grigorchuck group satisfies the nontrivial mixed identity $[[[[x,b],d],d],ada]=1$.
\end{prop}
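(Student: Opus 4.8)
The plan is to use the self-similar structure of the Grigorchuk group $G$. Write $H=\mathrm{St}_G(1)$ for the stabilizer of the first level of $T_2$: it is the kernel of the homomorphism $G\to\mathbb{Z}/2\mathbb{Z}$ sending $a\mapsto 1$ and $b,c,d\mapsto 0$, hence normal of index $2$, and it comes with the usual injection $\psi\colon H\hookrightarrow G\times G$ recording the two sections (the action on the subtree below $0$ and the action on the subtree below $1$). From the defining action one reads off $\psi(b)=(a,c)$, $\psi(c)=(a,d)$, $\psi(d)=(1,b)$; so $b,c,d\in H$ while $a\notin H$, and since $a$ exchanges the two subtrees, conjugation by $a$ interchanges the two coordinates of $\psi$.

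First I would prove that the identity holds. Fix any $x\in G$. Since $b\in H\trianglelefteq G$ we have $[x,b]\in H$; put $\psi([x,b])=(p,q)$. Using $\psi(d)=(1,b)$ and the commutator rule in $G\times G$, two short steps give $\psi([[x,b],d])=(1,[q,b])$ and then $\psi([[[x,b],d],d])=(1,[[q,b],b])$; hence $g:=[[[x,b],d],d]$ fixes every vertex of the form $0w$. On the other hand $\psi(ada^{-1})=(b,1)$ and $a^{2}=1$, so $h:=ada$ fixes every vertex of the form $1w$. An automorphism of $T_2$ that is the identity on the subtree below $0$ commutes with one that is the identity on the subtree below $1$, so $[g,h]=1$, i.e.\ $[[[[x,b],d],d],ada]=1$ in $G$; as $x$ was arbitrary, this is a mixed identity on $G$.

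Next I would check that $w(x):=[[[[x,b],d],d],ada]$ is a \emph{nontrivial} element of $G\ast\langle x\rangle$. Using only the relations $b^{2}=d^{2}=1$ and $[b,d]=1$ in $G$ (the set $\{1,b,c,d\}$ being a Klein four-group), the word reduces in the free product as $[x,b]=xbx^{-1}b$, then $[[x,b],d]=(xbx^{-1}d)^{2}$, and then $[[[x,b],d],d]=(xbx^{-1}d)^{4}$. Therefore $w(x)=(xbx^{-1}d)^{4}\,(ada)\,(xbx^{-1}d)^{-4}\,(ada)$, and the only possible cancellation is at the two junctions, where $G$-letters amalgamate into the single syllables $dada$ and $dadad$. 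None of $ada$, $dada$, $dadad$ is trivial in $G$: the element $ada$ is a conjugate of $d$; $dada=1$ would force $a$ and $d$ to commute, which is false (e.g.\ $ad$ has order $4$ in $G$); and $dadad$, being a conjugate of $ada$, is nontrivial as well. So $w(x)$ is a reduced word of syllable length $32$ in $G\ast\langle x\rangle$, and in particular $w(x)\neq 1$. (A shorter route to nontriviality: $v:=xbx^{-1}d$ is cyclically reduced of syllable length $4$ and is not a proper power, so its centralizer in $G\ast\langle x\rangle$ is $\langle v\rangle$, which meets $G$ trivially; hence $ada$ does not centralize $v^{4}=[[[x,b],d],d]$.)

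The real content is the first part, where the identity falls out of a one-line computation in $G\times G$ once the subgroup $H=\mathrm{St}_G(1)$ and the embedding $\psi$ are in place. The one point that needs a little attention is the nontriviality claim in $G\ast\langle x\rangle$: one has to rule out a complete collapse of the word, and for that I would invoke only the elementary facts that $b\neq 1$, that $\{1,b,c,d\}\cong(\mathbb{Z}/2\mathbb{Z})^{2}$, and that $a$ and $d$ do not commute.
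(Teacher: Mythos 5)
Your proof is correct, and it starts from the same structural ingredients as the paper: the level-one stabilizer $H$, the embedding $\psi:H\hookrightarrow G\times G$, and the crucial facts $\psi(d)=(1,b)$, $\psi(ada)=(b,1)$. Where you diverge is in how the conclusion is extracted. The paper packages the computation abstractly: it observes that the normal closure $K$ of $\langle d, ada\rangle$ in $H$ splits as a direct product of the conjugates of $d$ and of $ada$, deduces the mixed identity $[[x,d],ada]=1$ on $K$ from the direct-product example, and then climbs the subnormal chain $K\trianglelefteq H\trianglelefteq G$ using the Hull--Osin substitution $w(x)\mapsto w([x,n])$ (Remark \ref{subnormal rem}), once with $n=d$ and once with $n=b$; that lemma simultaneously certifies that the resulting word is nontrivial in $G\ast\langle x\rangle$. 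You instead verify the full identity on $G$ by a direct two-line computation in $G\times G$ (the supports of $[[[x,b],d],d]$ and $ada$ land in disjoint subtrees, hence commute), and you prove nontriviality of $w(x)$ by hand via the free-product normal form, using only $b^2=d^2=1$, $[b,d]=1$, and $ad\neq da$. Your route is more self-contained and makes the nontriviality completely explicit, at the cost of a slightly delicate syllable count; the paper's route is shorter but leans on the cited machinery. One cosmetic slip: at the interior junction the three consecutive $G$-syllables $d$, $ada$, $d$ amalgamate into the single syllable $dadad$ (the trailing $ada$ is preceded by $x^{-1}$ and needs no amalgamation), so describing the collapse as producing both $dada$ and $dadad$ is an artifact of doing the reduction in two passes --- but since you verify that both elements are nontrivial, the normal form and the count of $32$ syllables come out right and the argument stands.
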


\begin{proof}
To see that the Grigorchuk group satisfies a nontrivial mixed identity, first consider the subgroup $H=\langle b,c,d,aba,aca,ada \rangle$ which is the normal subgroup of index $2$ stabilizing the first level of $T_2$. The subgroup $H$ admits a monomorphism $\varphi: H \hookrightarrow G \times G$ given by sending
\begin{alignat*}{4}
&\varphi(b)&&=(a,c) \hspace{1cm}&& \varphi(aba)&&=(c,a) \\
&\varphi(c)&&=(a,d) \hspace{1cm}&& \varphi(aca)&&=(d,a) \\
&\varphi(d)&&=(1,b) \hspace{1cm}&& \varphi(ada)&&=(b,1) \\
\end{alignat*}
(For references, see \cite[Ch. 8, \#13-14]{dlH}.) Observe that $\varphi$ maps the $H$-conjugates of $ada$ into the first copy of $G$ and the $H$-conjugates of $d$ into the second copy of $G$. Hence, if $K$ is the normal closure in $H$ of the subgroup $\langle d, ada \rangle$, then $\varphi(K)$ is a nontrivial direct product, and since $\varphi$ is injective, $K$ itself decomposes as a nontrivial direct product where the two direct factors are the $H$-conjugates of $d$ and $ada$ respectively. Hence (as in Example \ref{direct product ex}) $K$ satisfies the nontrivial mixed identity $[[x,d],ada]=1$. Since $K$ is subnormal in $G$, we may apply Remark \ref{subnormal rem} twice to obtain that $G$ satisfies the nontrivial mixed identity $[[[[x,b],d],d],ada]=1$.
\end{proof}

\subsection{Akhmedov's construction of amenable groups with infinite girth}

Given a finitely generated group $G$, the \textit{girth} of $G$ is defined to be the infimum of all $n \in \mathbb{N}$ such that for every finite generating set $S$ of $G$, the Cayley graph $\Gamma (G,S)$ contains a cycle of length at most $n$ without self-intersections (see \cite{S}). In \cite{A}, Akhmedov details the construction of a finitely generated amenable group of infinite girth which does not satisfy any nontrivial identity. Such a group is a promising candidate for a finitely generated MIF amenable group not only because it is already identity-free, but also because by \cite[Prop. 5.4(d)]{H-O}, infinite girth is a requisite property for finitely generated MIF groups. However, the group constructed in \cite{A} is a nontrivial (restricted) wreath product, and all such groups satisfy a nontrivial mixed identity. Indeed, for any wreath product (restricted or unrestricted), we have the following.

\begin{prop}
Let $G$ be a wreath product of two nontrivial groups, and let $A \times B$ be any decomposition of the base of the wreath product into a direct product of nontrivial groups $A$ and $B$. Then for any $a \in A \backslash \{1\}$ and any $b \in B \backslash \{1\}$, $G$ satisfies the nontrivial mixed identity $[[[x,a],a],b]=1$.
\end{prop}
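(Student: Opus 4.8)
The plan is to verify two things separately: that $[[[x,a],a],b]=1$ holds as a mixed identity on $G$, and that $[[[x,a],a],b]$ is a nontrivial element of $G\ast\langle x\rangle$.

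First I would set up notation. Let $N$ be the base of the wreath product $G$; it is the kernel of the canonical epimorphism of $G$ onto the top group, hence $N\trianglelefteq G$, and by hypothesis $N=A\times B$ with $A,B$ nontrivial. I will use that, viewing $A$ and $B$ as the complementary normal subgroups of $N$, one has $A\cap B=\{1\}$ and $[A,B]=\{1\}$. For the identity part, fix $x\in G$. Since $N$ is normal, $[x,a]=xax^{-1}a^{-1}\in N$; writing $[x,a]=yz$ with $y\in A$, $z\in B$ and using that $z$ commutes with $a$, we get $[[x,a],a]=[yz,a]=[y,a]\in A$. Finally $b\in B$ centralizes $A$, so $[[[x,a],a],b]=1$; as $x$ was arbitrary, this is a mixed identity of $G$.

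For nontriviality I would reduce $w(x):=[[[x,a],a],b]$ to normal form in the free product $G\ast\langle x\rangle$. The key elementary observation is that $a\neq b$ (both are nontrivial and lie in the subgroups $A$, $B$ with $A\cap B=\{1\}$), so that $a^{-1}b\neq 1$ and hence also $a^{-1}ba\neq 1$ in $G$. A direct computation gives $v(x):=[[x,a],a]=xax^{-1}axa^{-1}x^{-1}a^{-1}$, a reduced word of syllable length $8$ that begins with an $\langle x\rangle$-syllable and ends with the $G$-syllable $a^{-1}$, while $v(x)^{-1}=axax^{-1}a^{-1}xa^{-1}x^{-1}$ begins with the $G$-syllable $a$. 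Concatenating $w(x)=v(x)\,b\,v(x)^{-1}\,b^{-1}$, the only places where adjacent syllables have the same type are where the trailing $a^{-1}$ of $v(x)$ meets $b$, and where the resulting syllable $a^{-1}b$ meets the leading $a$ of $v(x)^{-1}$; these amalgamate to the $G$-syllables $a^{-1}b$ and $a^{-1}ba$, both nontrivial by the observation above, and no genuine cancellation occurs. So $w(x)$ is represented by a reduced word of syllable length $16$, and in particular $w(x)\neq 1$.

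The only step that is not purely formal is this normal-form bookkeeping, and I expect its whole content to be the point just made: choosing $a$ and $b$ in complementary direct factors of the base is precisely what forces $a^{-1}b$, and hence $a^{-1}ba$, to survive, preventing the reduced word from collapsing. If one prefers to avoid the syllable count, the same conclusion follows from the standard fact that in a free product the centralizer of a nontrivial element of a free factor is contained in that factor: thus $C_{G\ast\langle x\rangle}(b)\subseteq G$, whereas $v(x)\notin G$ since its reduced form has syllable length $8$; hence $v(x)$ does not centralize $b$, i.e. $w(x)=[v(x),b]\neq 1$.
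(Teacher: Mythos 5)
Your proof is correct, but it takes a more self-contained route than the paper's. The paper disposes of the proposition in two lines by citing two earlier facts: Example 1.1, which says that the base $A\times B$ satisfies the nontrivial mixed identity $[[x,a],b]=1$, and Remark 3.6 (extracted from the proof of \cite[Prop.\ 5.4(c)]{H-O}), which says that a nontrivial mixed identity $w(x)=1$ on a normal subgroup $N\trianglelefteq G$ promotes to the nontrivial mixed identity $w([x,n])=1$ on $G$ for any $n\in N\setminus\{1\}$; taking $n=a$ yields exactly $[[[x,a],a],b]=1$. Your argument unpacks both citations: your observation that $[x,a]$ lands in the base because the base is normal is precisely the mechanism behind the substitution $x\mapsto[x,a]$ in Remark 3.6, and your computation $[[x,a],a]=[y,a]\in A$, killed by $b\in B$, is Example 1.1 in disguise. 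Where you genuinely diverge is nontriviality: the paper inherits it from the cited results, while you prove it directly by reducing $[[[x,a],a],b]$ to a $16$-syllable reduced word in $G\ast\langle x\rangle$ (your syllable bookkeeping checks out --- the only amalgamation produces $a^{-1}ba$, nontrivial since it is conjugate to $b\neq 1$), with the centralizer-of-a-free-factor-element fact as a cleaner fallback. What your approach buys is independence from \cite{H-O} and an explicit normal form; what the paper's buys is brevity and the reusable general principle for promoting mixed identities along normal subgroups, which it also exploits for the Grigorchuk group.
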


\begin{proof}
As in Example \ref{direct product ex}, the base $A \times B$ of the wreath product $G$ satisfies the nontrivial mixed identity $[[x,a],b]=1$. Since the base is normal in $G$, we can apply Remark \ref{subnormal rem} to obtain that $G$ satisfies the nontrivial mixed identity $[[[x,a],a],b]=1$.
\end{proof}

\vspace{1cm}

\noindent \textbf{Bryan Jacobson: } Department of Mathematics, Vanderbilt University, Nashville, TN 37240, U.S.A.\\
E-mail: \emph{bryan.j.jacobson@vanderbilt.edu}

\end{document}